\newfont{\wncyr}{wncyr10 at 12pt}
\newfont{\wncyrten}{wncyr10 at 10pt}
\newcommand{\Br}{\operatorname{Br}}
\newcommand{\Gal}{{\operatorname{Gal}}}
\newcommand{\ord}{{\operatorname{ord}}}
\newcommand{\CT}{{\operatorname{CT}}}
\newcommand{\SL}{{\operatorname{SL}}}
\newcommand{\eps}{\varepsilon}
\newcommand{\Jac}{{\operatorname{Jac}}}
\newcommand{\disc}{\operatorname{disc}}
\newcommand{\content}{\operatorname{content}}
\newcommand{\im}{\operatorname{im}}
\newcommand{\Pic}{\operatorname{Pic}}
\newcommand{\Div}{\operatorname{Div}}
\newcommand{\divv}{\operatorname{div}}
\newcommand{\pr}{\operatorname{pr}}
\newcommand{\rank}{\operatorname{rank}}
\newcommand{\inv}{\operatorname{inv}}
\newcommand{\ra}{\longrightarrow}
\newcommand{\OO}{{\mathcal O}}
\newcommand{\F}{{\mathbb F}}
\newcommand{\PP}{{\mathbb P}}
\newcommand{\Kbar}{{\overline K}}
\newcommand{\Q}{{\mathbb Q}}
\newcommand{\Gm}{{\mathbb G}_m}
\newcommand{\Z}{{\mathbb Z}}
\newcommand{\isom}{\cong}
\newcommand{\Sha}{\mbox{\wncyr Sh}}
\newenvironment{ProofOf}[1]{\par\noindent{\em Proof of #1.}}%
                        {\hspace*{\fill}\nobreak$\Box$\par\medskip}
\newtheorem{Proposition}{Proposition}[section]
\newtheorem{Theorem}[Proposition]{Theorem}
\newtheorem{Lemma}[Proposition]{Lemma}
\newtheorem{Corollary}[Proposition]{Corollary}
\theoremstyle{definition}
\newtheorem{Remark}[Proposition]{Remark}
\newtheorem{Example}[Proposition]{Example}
\begin{document}

\title[Binary quartics]%
{On binary quartics and the Cassels-Tate pairing}

\author{Tom~Fisher}
\address{University of Cambridge,
         DPMMS, Centre for Mathematical Sciences,
         Wilberforce Road, Cambridge CB3 0WB, UK}
\email{T.A.Fisher@dpmms.cam.ac.uk}

\date{31st August 2022}  

\begin{abstract}
We use the invariant theory of binary quartics to give a new
formula for the Cassels-Tate pairing on the $2$-Selmer group
of an elliptic curve. Unlike earlier methods, our formula does
not require us to solve any conics. An important role in 
our construction is played by a certain $K3$ surface defined by
a $(2,2,2)$-form.
\end{abstract}

\maketitle

\renewcommand{\baselinestretch}{1.1}
\renewcommand{\arraystretch}{1.3}
\renewcommand{\theenumi}{\roman{enumi}}

\section{Introduction}

Let $E$ be an elliptic curve over a number field $K$. The Mordell-Weil
theorem tells us that the abelian group $E(K)$ is finitely generated,
but there is no known algorithm guaranteed to compute its rank.
Instead, for each integer $n \ge 2$ there is an exact sequence of 
abelian groups
\[ 0 \to E(K)/n E(K) \to S^{(n)}(E/K) \to \Sha(E/K)[n] \to 0. \]
The $n$-Selmer group $S^{(n)}(E/K)$ is finite and effectively computable.
Computing $S^{(n)}(E/K)$ gives an upper bound for the rank of $E(K)$,
but this will be sharp only if the $n$-torsion of the 
Tate-Shafarevich group $\Sha(E/K)$ is trivial.

Cassels \cite{CaIV} showed that there is an alternating pairing
\[ \langle~.~\rangle_{\CT} : S^{(n)}(E/K) \times S^{(n)}(E/K) \to \Q/\Z \]
whose kernel is the image of $S^{(n^2)}(E/K)$. 
By computing this pairing,
our upper bound for the rank of $E(K)$ improves from that
obtained by $n$-descent to that obtained by $n^2$-descent.
In view of the generalisation to abelian varieties, due to Tate,
the pairing is known as the Cassels-Tate pairing.
 
Cassels \cite{Ca98} also described a method for computing the pairing
in the case $n=2$. His method involves solving conics over the field
of definition of each $2$-torsion point on $E$. More recently, Donnelly
\cite{D} 
found a method that only involves solving conics over $K$, and implemented
this in {\tt Magma} \cite{magma}. In this
article we use the invariant theory of binary quartics to give a
self-contained account of a version
of his method that is relatively simple to implement.

\enlargethispage{3ex}

Since this article was first written, Jiali Yan has written her
PhD thesis \cite{Yan}, extending some of these ideas
to Jacobians of genus 2 curves, and Bill Allombert has implemented
our method for computing the pairing as part of the function {\tt ellrank}
in {\tt pari/gp} \cite{pari}.
I thank them both, and also Steve Donnelly and John Cremona, for
useful discussions.

\section{Binary quartics}
\label{sec:bq}

A {\em binary quartic} over a field $K$ is a homogeneous polynomial
$g \in K[x,z]$ of degree $4$. Binary quartics 
$g_1$ and $g_2$ are {\em $K$-equivalent}
if 
\[ g_2(x,z) = \lambda^2 g_1 (\alpha x + \gamma z, \beta x + \delta z) \]
for some $\lambda, \alpha, \beta, \gamma, \delta \in K$ with 
$\lambda(\alpha \delta - \beta \gamma) \not= 0$. They are {\em properly}
$K$-equivalent if in addition $\lambda(\alpha \delta - \beta \gamma) = \pm 1$.
The invariants of the binary quartic
\begin{equation}
\label{bq}
g(x,z) = a x^4 + b x^3 z + c x^2 z^2 + d x z^3 + e z^4
\end{equation}
are 
\begin{align*}
I &= 12 a e - 3 b d + c^2 , \\
J &= 72 a c e - 27 a d^2 - 27 b^2 e + 9 b c d - 2 c^3.
\end{align*}
The binary quartics $g_1$ and $g_2$ have invariants related by
$I(g_2) = \lambda^4 (\alpha \delta - \beta \gamma)^4 I(g_1)$ and
$J(g_2) = \lambda^6 (\alpha \delta - \beta \gamma)^6 J(g_1)$.
In particular, properly equivalent binary quartics have
the same invariants.
The discriminant is $\Delta = 16(4 I^3 - J^2)/27$.
We say that $g$ is {\em $K$-soluble} if there exist $x,z \in K$, not 
both zero, such that $g(x,z)$ is a square in $K$. The reason for
this terminology is that if $\Delta(g) \not=0$ then there is 
a smooth projective curve $C$ of genus one with affine equation 
$y^2 = g(x,1)$, and we are asking that $C(K) \not= \emptyset$.
As shown by Weil~\cite{Weil54}, the Jacobian of $C$ is the elliptic curve
\begin{equation}
\label{jac}
 E_{I,J} : \quad y^2 = x^3 - 27 I x - 27 J. 
\end{equation}

Now let $K$ be a number field, and $M_K$ its set of 
places. A binary quartic over $K$ is
{\em everywhere locally soluble} if it is $K_v$-soluble for all places
$v \in M_K$. We note that every elliptic curve over $K$ can be
written in the form~\eqref{jac} for some $I,J \in K$ with 
$4 I^3 - J^2 \not =0$.

\begin{Lemma}
\label{propsel2}
Let $I,J \in K$ with $4 I^3 - J^2 \not =0$. Then
\[ S^{(2)}(E_{I,J}/K) = \left\{ \begin{array}{c}
\text{everywhere locally soluble} \\
\text{binary quartics over $K$} \\
\text{with invariants $I$ and $J$} \end{array} \right\}
/(\text{proper $K$-equivalence}). \]
\end{Lemma}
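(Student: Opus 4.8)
The plan is to route through the standard cohomological description of the $2$-Selmer group and to set up a dictionary between binary quartics and $2$-coverings of $E_{I,J}$. Writing $E = E_{I,J}$, recall that $H^1(K,E[2])$ classifies $2$-coverings $\pi \colon C \to E$ (pairs becoming isomorphic to $[2] \colon E \to E$ over $\Kbar$), and that $S^{(2)}(E/K)$ is the subgroup of classes whose restriction to $H^1(K_v,E)$ vanishes for every place $v$; geometrically these are the coverings $C$ that acquire a $K_v$-point everywhere locally. The statement will therefore follow from two things: (a) an identification of binary quartics with invariants $I$ and $J$, modulo proper $K$-equivalence, with the group $H^1(K,E[2])$; and (b) the fact that $K_v$-solubility of a quartic corresponds exactly to local triviality of the associated torsor.

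For (a), to a binary quartic $g$ of nonzero discriminant I would attach the genus-one curve $C_g \colon y^2 = g(x,z)$ in the weighted projective plane $\PP(1,1,2)$. By the construction of Weil already recalled, $\Jac(C_g) = E_{I,J}$, and the degree-two map $C_g \to \PP^1$, $(x:z:y) \mapsto (x:z)$, ramified along the four roots of $g$, equips $C_g$ with the structure of a $2$-covering $\pi \colon C_g \to E_{I,J}$. I would then check that a second quartic $g'$ is properly $K$-equivalent to $g$ precisely when $(C_{g'},\pi')$ and $(C_g,\pi)$ are isomorphic as $2$-coverings: the substitution $(x,z) \mapsto (\alpha x + \gamma z, \beta x + \delta z)$ together with the scaling by $\lambda^2$ is exactly the data of an isomorphism of the weighted models respecting the covering map, and the normalisation $\lambda(\alpha\delta - \beta\gamma) = \pm 1$ is what keeps the invariants, which scale by $\lambda^4(\alpha\delta - \beta\gamma)^4$ and $\lambda^6(\alpha\delta - \beta\gamma)^6$, fixed at exactly $I$ and $J$. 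The real content is surjectivity: that every class of $H^1(K,E[2])$ is represented by some quartic with invariants \emph{equal} to $I$ and $J$. For this I would use that a $2$-covering carries a canonical $K$-rational divisor class of degree two whose linear system furnishes the degree-two map to $\PP^1$; its four branch points form a binary quartic, and the covering map pins the invariants down to $I$ and $J$ on the nose, rather than merely up to the twisting ambiguity one would have if only the torsor $C$ were remembered.

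For (b), the key point is the elementary equivalence that $g$ is $K_v$-soluble, i.e.\ $g(x,z)$ is a nonzero square in $K_v$ for some $(x,z) \neq (0,0)$, exactly when $C_g(K_v) \neq \emptyset$, since such a pair lifts to a $K_v$-point of $C_g$ and conversely (the points where $g$ vanishes, or at infinity, being handled directly using $\Delta \neq 0$). Having a $K_v$-point is the assertion that the torsor $C_g$ is trivial in $H^1(K_v,E)$, equivalently that $\operatorname{res}_v$ of the corresponding class in $H^1(K,E[2])$ dies in $H^1(K_v,E)$. Imposing this at every $v$ is by definition the condition cutting out $S^{(2)}(E/K)$ inside $H^1(K,E[2])$, so the everywhere locally soluble quartics are exactly the Selmer classes.

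The step I expect to be the main obstacle is the surjectivity in (a): manufacturing, from an abstract everywhere locally soluble $2$-covering, an honest binary quartic model, and verifying that its invariants come out equal to $I$ and $J$ and not merely proportional, together with the matching of proper equivalence against isomorphism of coverings. This is precisely where the invariant theory of binary quartics does the real work, and where one must argue carefully rather than formally.
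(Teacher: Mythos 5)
The paper itself does not reprove this lemma: it cites \cite{BSDI} for $K=\Q$ and asserts the general case is similar, the one piece of mathematical content being the remark that over $\Q$ equivalent quartics with equal invariants are automatically \emph{properly} equivalent because $\pm 1$ are the only roots of unity. Your outline follows the same standard route that underlies that citation (quartics $\leftrightarrow$ $2$-coverings, $K_v$-solubility $\leftrightarrow$ local triviality of the torsor), and part (b) together with the easy direction of your dictionary is fine. But two concrete gaps remain, one of which you flag and one of which you do not.

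First, surjectivity. You assert that the canonical degree-two divisor class on a $2$-covering ``furnishes the degree-two map to $\PP^1$.'' For a general class in $H^1(K,E[2])$ this is false: the class is Galois-stable but need not contain a $K$-rational divisor, so the linear system a priori maps $C$ only to a conic without a rational point, and no binary quartic model exists. The everywhere-local-solubility hypothesis enters here in an essential, non-formal way: a $K_v$-point yields a $K_v$-rational divisor in the class for every $v$, so the obstruction in $\Br(K)$ is everywhere locally trivial, and one must invoke the injectivity of $\Br(K)\to\bigoplus_v \Br(K_v)$ to conclude it vanishes. Without this step the surjectivity argument does not close. Second, injectivity, which you treat as automatic. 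An isomorphism of $2$-coverings induces an equivalence of the quartics with scaling factor $\mu=\lambda(\alpha\delta-\beta\gamma)$ satisfying $\mu^4 I=I$ and $\mu^6 J=J$; when $I,J\neq 0$ this forces $\mu=\pm 1$ and hence proper equivalence, but when $I=0$ or $J=0$ and $K$ contains nontrivial fourth or sixth roots of unity, $\mu$ need not be $\pm 1$, and one must argue separately that the equivalence can be corrected to a proper one. This is exactly the subtlety the paper's own proof isolates as the difference between $\Q$ and a general number field, and it is the subject of the cited reference \cite{CF}; your proposal passes over it.
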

\begin{proof}
The case $K = \Q$ is proved in \cite{BSDI}, the only simplification
in this case being that (since the only roots of unity in $\Q$ are
$\pm 1$) equivalent quartics with the same invariants
are always properly equivalent (even in the cases where $I=0$ or $J=0$).
The general case is similar.
\end{proof}

Although Lemma~\ref{propsel2} specifies $S^{(2)}(E_{I,J}/K)$ 
as a set, the group law is not obvious. The following description
is taken from \cite{Cr}, \cite{CF}. Let $L$ be the \'etale algebra
$K[\varphi]$ where
$\varphi$ is a root of $X^3 - 3 I X + J = 0$. Then the binary
quartic~\eqref{bq} has {\em cubic invariant}
\[ z(g) = \frac{4 a \varphi + 3 b^2 - 8 a c}{3}. \]
By a change of coordinates (that is, replacing $g$ by a properly
equivalent quartic) we may assume that $z(g)$ is a unit in $L$.
The group law on $S^{(2)}(E_{I,J}/K)$ is then given by multiplying
the cubic invariants in $L^\times/(L^\times)^2$. The 
method for converting an element of $L^\times/(L^\times)^2$ back to a binary
quartic does, however, involve solving a conic over $K$.

\section{Statement of results}
\label{sec:stat}

In this section we state our new formula for the Cassels-Tate pairing
on the $2$-Selmer group of an elliptic curve.
First we need some more invariant theory.
The binary quartic~\eqref{bq} has Hessian 
\begin{align*}
 h (x, z ) = (3&b^2 - 8ac)x^4 + 4(bc - 6ad)x^3 z
 + 2(2c^2 - 24ae - 3bd)x^2 z^2 \\ & +4(cd - 6be)xz^3 + (3d^2 - 8ce)z^4. 
\end{align*}
There are exactly three linear combinations of 
$g(x,z)$ and $h(x,z)$ that are singular (i.e. have repeated roots).
Following~\cite{CF} this prompts us to put 
\begin{align} 
G(x,z) &= \frac{1}{3} \left( 4 \varphi g(x,z) + h(x,z) \right), \\
\label{def:H}
H(x,z) &= \frac{1}{12} \frac{\partial^2 G}{\partial x^2} + \frac{2}{9} (I 
-\varphi^2) z^2,
\end{align}
so that $G(1,0) G(x,z) = H(x,z)^2$. We note that $z(g) = G(1,0) = H(1,0)$.

\begin{Theorem}
\label{thm:main}
Let $I,J \in K$ with $4 I^3 - J^2 \not =0$.
Let $g_1,g_2,g_3$ be 
everywhere locally soluble binary quartics over $K$ 
with invariants $I$ and $J$.
Let $H_1(x,z)$ be the binary quadratic
form~\eqref{def:H}, with coefficients in $L = K[\varphi]$, 
associated to $g_1$. Suppose that $z(g_1) z(g_2) z(g_3) = m^2$
for some $m \in L^\times$, and write
\begin{equation}
\label{getgamma}
  \frac{z(g_2) z(g_3)}{m} H_1(x,z) = \alpha_1(x,z) + \beta_1(x,z) 
\varphi + \gamma_1(x,z) \varphi^2 
\end{equation}
where $\alpha_1,\beta_1,\gamma_1 \in K[x,z]$. For each $v \in M_K$ 
we choose $x_v,z_v \in K_v$ with 
$g_1(x_v,z_v)$ a square in $K_v$ and $\gamma_1(x_v,z_v) \not= 0$. 
If $g_2(1,0) \not=0$ then 
the Cassels-Tate pairing on $S^{(2)}(E_{I,J}/K)$ is given by
\begin{equation}
\label{myctp}
 \langle [g_1], [g_2] \rangle_{\CT} = \prod_{v \in M_K} 
(g_2(1,0),\gamma_1(x_v,z_v))_v 
\end{equation}
where $(~,~)_v : K_v^\times/(K_v^\times)^2 \times K_v^\times/(K_v^\times)^2
\to \mu_2$ is the Hilbert norm residue symbol.
\end{Theorem}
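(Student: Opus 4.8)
The plan is to start from the homogeneous-space description of the Cassels-Tate pairing and reduce it to a sum of local terms, then identify each local term with the asserted Hilbert symbol by means of the invariant-theoretic identity satisfied by $H_1$. Recall that $a = [g_1]$ is represented by the genus-one torsor $C_1 : y^2 = g_1(x,z)$ together with its $2$-covering map to $E_{I,J}$, and that $b = [g_2]$ determines the class $z(g_2) \in L^\times/(L^\times)^2$. Starting from the standard local formula for the pairing (going back to Cassels \cite{Ca98}), I write $\langle a, b \rangle_{\CT}$ as $\sum_{v} \inv_v(W_v)$, where $W_v \in \Br(K_v)[2]$ is built from a local point $P_v = (x_v : z_v) \in C_1(K_v)$, which exists because $g_1$ is everywhere locally soluble, and a cocycle representing $b$ via the Weil pairing $e_2 : E[2] \times E[2] \to \mu_2$. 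This reduces the theorem to the purely local claim that $W_v$ is the quaternion algebra $(g_2(1,0), \gamma_1(x_v,z_v))_v$.

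The first substantial step is to make the class $b$ explicit as a rational object along $C_1$. Here the hypothesis $z(g_1) z(g_2) z(g_3) = m^2$ enters: it forces $z(g_2)z(g_3) = m^2/z(g_1)$, and combined with the identity $z(g_1)\,G_1(x,z) = H_1(x,z)^2$ coming from $G(1,0)G(x,z) = H(x,z)^2$, it yields
\[ \big(\alpha_1(x,z) + \beta_1(x,z)\varphi + \gamma_1(x,z)\varphi^2\big)^2 = z(g_2)\,z(g_3)\,G_1(x,z) \]
in $L[x,z]$. Thus the auxiliary quartic $g_3$, which by the cubic-invariant group law represents $[g_1] + [g_2]$, supplies exactly the trivialization needed to exhibit $z(g_2) z(g_3) G_1$ as a square in $L[x,z]$ with $K$-rational coordinate polynomials $\alpha_1, \beta_1, \gamma_1$. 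Evaluating at $P_v$ and using that $g_1(x_v, z_v)$ is a square in $K_v$, I can read off the local image of $b$ at $P_v$ from the $\varphi^2$-component $\gamma_1(x_v, z_v)$ of the square root.

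The technical heart, and the step I expect to be the main obstacle, is the explicit evaluation of the Weil-pairing cup product defining $W_v$, and in particular showing that the a priori $L_v$-valued pairing collapses to the single $K_v$-Hilbert symbol $(g_2(1,0), \gamma_1(x_v,z_v))_v$. This requires writing $e_2$ in terms of the \'etale algebra $L = K[\varphi]$, so that the cup product becomes a corestriction from $L_v$ to $K_v$ of a Hilbert symbol, and then checking that the shape of the square root $\alpha_1 + \beta_1\varphi + \gamma_1\varphi^2$ makes every component except the one recorded by $\gamma_1$ pair trivially. The appearance of $g_2(1,0)$ as the second argument should emerge as the leading-coefficient avatar of $z(g_2)$ under the descent map; verifying this identification, and that the hypothesis $g_2(1,0) \neq 0$ makes it a well-defined class in $K_v^\times/(K_v^\times)^2$, is part of this computation.

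Finally, I address well-definedness and the reduction to a finite product. Changing the local points $P_v$, the square root $m$, or the representative $g_3$ alters each local factor only by terms whose product over all $v$ is trivial, which follows from Hilbert reciprocity together with the fact that $z(g_2)z(g_3)G_1$ is a global square. Almost all local factors equal $1$ since $g_2(1,0)$ and $\gamma_1(x_v,z_v)$ can be taken to be $v$-adic units at places of good reduction with odd residue characteristic, so the product is finite and lands in $\mu_2$. Matching $\prod_v (g_2(1,0),\gamma_1(x_v,z_v))_v$ with $\sum_v \inv_v(W_v)$ through the local-global compatibility of the invariant maps then completes the proof.
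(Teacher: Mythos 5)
Your proposal is not yet a proof: the step you yourself flag as ``the main obstacle'' --- showing that the local term equals the single Hilbert symbol $(g_2(1,0),\gamma_1(x_v,z_v))_v$ --- is precisely the content of the theorem, and you leave it as a description of what would need to be checked rather than carrying it out. Nothing in the proposal explains why the relevant quantity is the $\varphi^2$-coefficient $\gamma_1$ rather than some other combination of $\alpha_1,\beta_1,\gamma_1$, nor why the first argument of the symbol is $g_2(1,0)$. On the latter point your guess is also off target: in the paper the class that arises naturally is $(g_3(1,0),\gamma_1(x_v,z_v))_v$ --- the leading coefficient of the \emph{third} quartic appears as the twisting scalar $a$ in Lemma~\ref{prop:22} and Corollary~\ref{cor}, because $\disc_k(F)=g_ig_j$ exhibits $C_1$ and $C_2$ as quadratic twists by $a=g_3(1,0)$ of the curves cut out by the $(2,2)$-form $F(\,\cdot\,;\,\cdot\,;1,0)$ --- and one only passes to $g_2(1,0)$ at the very end via $\langle[g_1],[g_2]\rangle_{\CT}=\langle[g_1],[g_3]\rangle_{\CT}$. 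It is not a ``leading-coefficient avatar of $z(g_2)$''.

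The route you sketch is also structurally different from the paper's: you invoke the Weil-pairing/cup-product form of the definition from \cite{Ca98}, whereas the paper uses the homogeneous space definition (Section~\ref{sec:ctp}), evaluating a class in $\Br(C_1)$ at local points. That class is realised as the cyclic algebra $(K(\sqrt{g_3(1,0)})/K,\,F(x,z;1,0;1,0)/z^2)$, where $F$ is the $(2,2,2)$-form cutting out the Kummer surface $\pi(\mu^{-1}(0_E))\subset\PP^1\times\PP^1\times\PP^1$; Theorem~\ref{thm:m} then identifies $F$ with the $\varphi^2$-coefficient of $H_1H_2H_3/m$, which is the actual source of $\gamma_1$. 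Your identity $(\alpha_1+\beta_1\varphi+\gamma_1\varphi^2)^2=z(g_2)z(g_3)G_1$ is correct, but by itself it does not produce a Brauer class; the geometric input (the surface $S$, Lemma~\ref{prop:22} on $(2,2)$-forms and quadratic twists, and the cyclic-algebra computation of Lemma~\ref{lem:cyc}) is what is missing. A secondary error: independence of the choice of $P_v$ does not follow from Hilbert reciprocity, since changing one $P_v$ alters only one local factor; it follows from the local triviality of $[C_2]$, which forces the restriction of the Brauer class to $\Br(C_1\times_K K_v)$ to be constant.
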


\begin{Remark}
\label{rems}
(i) If we wish to compute the pairing starting
only with $g_1$ and $g_2$, then we first change coordinates so 
that $z(g_1)$ and $z(g_2)$ are units in $L$, multiply these together, 
and then compute $g_3$ by solving a conic over $K$.
This conic is the same as the one that has to be solved 
in Donnelly's method \cite{D}. \\
(ii) We show in Remark~\ref{rem:gamma} 
that the binary quadratic form $\gamma_1$ is
not identically zero. Therefore, by our assumption that $g_1$ is 
everywhere locally soluble, it is always possible to choose 
$x_v, z_v \in K_v$ with the stated properties. \\
(iii) The assumption that $g_2(1,0) \not=0$ is no limitation, since 
if $g_2(1,0)=0$ then $[g_2] = 0$ in the $2$-Selmer group, which
certainly implies the pairing is trivial. \\
(iv) By definition the Cassels-Tate pairing takes values in $\Q/\Z$.
In our formula it takes values in $\mu_2$. It should be understood
that we have identified $\mu_2 = \frac{1}{2}\Z/\Z$. \\
(v) Since $\langle~,~\rangle_{\CT}$ is alternating and bilinear
and $[g_1] + [g_2] + [g_3] = 0$ we have
$\langle [g_1], [g_2] \rangle_{\CT} = \langle [g_1], [g_3] \rangle_{\CT}$.
So we may equally write $g_2(1,0)$ or $g_3(1,0)$ in~\eqref{myctp}.
Notice however that the binary quartics $g_2$ and $g_3$ do both 
contribute to the pairing via~\eqref{getgamma}. 
Moreover we must use the exact formulae for 
$z(g_1)$, $z(g_2)$ and $z(g_3)$, these being linear in $\varphi$.
It is not enough just to 
know these quantities up to squares, since this would change
the left hand side of~\eqref{getgamma}. \\
(vi) If $E(K)[2]=0$ then $m$ is uniquely determined up to sign.
By the product formula for the Hilbert norm residue symbol this makes
no difference to~\eqref{myctp}. If $E(K)[2] \not=0$ then there are
more choices for $m$, but it turns out (see the proof of
Theorem~\ref{thm:m}) that we may use any one of these to
compute the pairing.
\end{Remark}

\enlargethispage{2.5ex}

\begin{Remark}
The product over all places in Theorem~\ref{thm:main} is a finite product.
Indeed, outside an easily determined finite set of places, we have
\begin{enumerate}
\item $v$ is a finite prime, with residue field of size at least $11$.
\item $g_1$ and $\gamma_1$ have $v$-adically integral coefficients,
  with $v \nmid \Delta(g_1) \content(\gamma_1)$.
\item $g_2(1,0)$ is a $v$-adic unit and $v \nmid 2$.
\end{enumerate}
Under conditions (i) and (ii) we can pick our local point
(by Hensel lifting a smooth point on the reduction that is not
a root of $\gamma_1$) such that
$\gamma_1(x_v,z_v)$ is a unit. It follows by (iii) that
the local contribution at $v$ 
is trivial.
\end{Remark}

\begin{Example} Let $E/\Q$ be the elliptic curve
\[  y^2 + y = x^3 - x^2 - 929 x - 10595 \]
labelled $571a1$ in \cite{CrTables}.
A $2$-descent shows that $S^{(2)}(E/\Q) \isom (\Z/2\Z)^2$, and its
non-zero elements are represented by
\begin{align*}
g_1(x,z) &= -11 x^4 + 68 x^3 z - 52 x^2 z^2 - 164 x z^3 - 64 z^4, \\
g_2(x,z) &= -4 x^4 - 60 x^3 z - 232 x^2 z^2 - 52 x z^3 - 3 z^4, \\
g_3(x,z) &= -31 x^4 - 78 x^3 z + 32 x^2 z^2 + 102 x z^3 - 53 z^4.
\end{align*}
Each of these binary quartics has invariants $I=44608$ and $J=18842960$, 
and discriminant $\Delta = -2^{12} \cdot 571$. 
By~\eqref{getgamma}, with 
$m = \frac{1}{9}(20 \varphi^2 - 8656 \varphi + 936032)$,
we get \[\gamma_1(x,z) = \frac{4}{9} (5 x^2 - 16 x z - 12 z^2).\]

For each odd prime $p$ there is a smooth $\F_p$-point
on the reduction of $y^2 = g_1(x,1)$ mod $p$, whose $x$-coordinate 
is not a root of $5 x^2 - 16 x - 12 = 0$.
Indeed we checked this claim directly for $p=3,5,7,11$ and $571$, and for
all other primes it follows by Hasse's bound. Therefore the odd primes
make no contribution to~\eqref{myctp}.

To compute the contribution at $p=2$ we write $g_1(x,1) = x^4 + 4 q(x)$
where
\[  q(x) = -3 x^4 + 17 x^3 - 13 x^2 - 41 x  - 16. \]
By Hensel's lemma the equation $q(x)=0$ has a root in $\Z_2$ 
with $x = 2^4 + O(2^5)$.
But then $\gamma_1(x,1) \equiv 5 \mod (\Q_2^\times)^2$, and since 
$(5,-1)_2=1$ the contribution is again trivial. 
Finally, since $g_1(15,4) >0$ and $\gamma_1(15,4) < 0$,
there is a contribution from the real place. This shows that 
the Cassels-Tate pairing on $S^{(2)}(E/\Q)$ is non-trivial, 
and hence $\rank E (\Q) = 0$.
\end{Example}

\section{The Cassels-Tate pairing}
\label{sec:ctp}

There are two standard definitions of the Cassels-Tate pairing
(in the case of elliptic curves) called in~\cite{F}, \cite{PS} the 
{\em homogeneous space definition} and the {\em Weil pairing
definition}. Both definitions appear in Cassels' original paper
\cite{CaIV}, although the method in \cite{Ca98} (see also \cite{FSS})
is a variant of the Weil pairing definition. 
In this section we
review the homogeneous space definition, and highlight its connection
with the Brauer-Manin obstruction.

Let $K$ be a field with separable closure $\Kbar$. We write
$H^i(K,-)$ for the Galois cohomology group $H^i(\Gal(\Kbar/K),-)$.
Let $C/K$ be a smooth projective curve. We define
\begin{equation}
\label{def:BrC}
\Br(C) = \ker \bigg( H^2(K,\Kbar(C)^\times) \to H^2(K, \Div C) \bigg).
\end{equation}
It is shown in the Appendix to \cite{Lichtenbaum} that
this is equivalent to the usual definition $\Br(C) = H^2_{{\text{\'et}}}(C,\Gm)$.
Identifying $\Br(K) = H^2(K,\Kbar^\times)$, there is a natural map
\begin{equation}
\label{incl}
\Br(K) \to \Br(C). 
\end{equation}
We will need the following two facts, whose proofs we give below.

(i) For $P \in C(K)$ there is an evaluation map
\[  \Br(C) \to \Br(K) \, ; \,\, A \mapsto A(P). \]
This is a group homomorphism, and a section to the map~\eqref{incl}.
Moreover the evaluation maps behave functorially with respect to 
all field extensions.

(ii) Suppose $C$ is a smooth curve of genus one, with Jacobian
elliptic curve $E$. If $H^3(K,\Kbar^\times) = 0$
then there is an isomorphism 
\begin{equation}
\label{PsiC}
 \Psi_C : \frac{H^1(K,E)}{\langle[C]\rangle} \stackrel{\sim}{\ra} 
\frac{\Br(C)}{\Br(K)}. 
\end{equation}

Now let $E$ be an elliptic curve over a number field $K$. 
Let $C$ and $D$ be principal homogeneous spaces under $E$. 
Since $H^3(K,\Kbar^\times) = 0$ 
for $K$ a number field, we have $\Psi_C([D]) = A \mod{\Br(K)}$
for some $A \in \Br(C)$.
Now suppose that $C$ and~$D$ are everywhere locally soluble. 
For each place $v \in M_K$ we pick a local point $P_v \in C(K_v)$.
The Cassels-Tate pairing 
$\Sha(E/K) \times \Sha(E/K) \to \Q/\Z$ is defined by
\begin{equation}
\label{def:ctp}
 \langle [C], [D] \rangle_{\CT} = \sum_{v \in M_K} \inv_v (A(P_v)) 
\end{equation}
where $\inv_v : \Br(K_v) \to \Q/\Z$ is the local invariant map.  As
this form of the definition makes clear, if $\langle [C], [D]
\rangle_{\CT} \not=0$ then the genus one curve $C$ is a 
counter-example to the Hasse Principle
explained by the Brauer-Manin obstruction.

We check that the pairing is well defined, i.e. it does not depend 
on the choices of $A$ and of the $P_v$. By class field theory there 
is an exact sequence
\[ 0 \ra \Br(K) \ra \bigoplus_{v \in M_K} \Br(K_v) 
\stackrel{\sum \inv_v}{\ra} \Q/\Z \ra 0. \]
It follows that if we change $A$ by adding an element of $\Br(K)$ then the 
pairing~\eqref{def:ctp} is unchanged. Next, since the class of $D$ 
is trivial in $H^1(K_v,E)$, the analogue of~\eqref{PsiC} over $K_v$ shows that
the restriction of $A$ to the Brauer group of $C/K_v$ is constant, 
i.e. it belongs to 
the image of $\Br(K_v)$. Therefore the pairing~\eqref{def:ctp}
does not depend on the choice of local points $P_v$.

We now prove the facts we quoted in (i) and (ii) above.

(i) For $P \in C(K)$ there is a short exact sequence of Galois modules
\[ 0 \ra \OO_P^\times \ra \Kbar(C)^\times \stackrel{\ord_P}{\ra} \Z \ra 0 \]
where $\OO_P$ is the local ring at $P$. Taking Galois cohomology gives
an exact sequence 
\[ 0 \ra H^2(K,\OO_P^\times) \ra H^2(K,\Kbar(C)^\times) 
\stackrel{\ord_P}{\ra}  H^2(K,\Z). \]
It follows by~\eqref{def:BrC} 
that each element of $\Br(C)$ can be represented by
a cocycle taking values in $\OO_P^\times$, and so can be evaluated at $P$.

(ii) There is an exact sequence of Galois modules
\[ 0 \to \Kbar^\times \to \Kbar(C)^\times \to \Div C \to \Pic C \to 0, \]
where $\Div C$ and $\Pic C$ are the divisor group and Picard group
for $C$ over $\Kbar$.
Splitting into short exact sequences, and taking Galois cohomology,
gives the following exact sequences
\[ \xymatrix{ & & H^2(K,\Kbar^\times) \ar[d] \\ 
& & H^2(K,\Kbar(C)^\times ) \ar[d] \\
H^1(K, \Div C) \ar[r] & H^1(K,\Pic C) \ar[r] & H^2(K,\Kbar(C)^\times
/\Kbar^\times ) \ar[r] \ar[d] & H^2(K, \Div C) \\ 
& & H^3(K,\Kbar^\times) }
\]
By Shapiro's lemma and the fact that $H^1(K,\Z)=0$ we have $H^1(K, \Div C)=0$.
It follows by~\eqref{def:BrC} and a diagram chase that
there is an exact sequence
\begin{equation}
\label{hs}
 \Br(K) \to \Br(C) \to H^1(K, \Pic C) \to H^3(K,\Kbar^\times).
\end{equation}
In fact, had we started from the definition
$\Br(C) = H^2_{{\text{\'et}}}(C,\Gm)$,
then~\eqref{hs} would follow
from the Hochschild--Serre spectral sequence.

If $C$ is a smooth curve of genus one with Jacobian $E$, then taking
Galois cohomology of the exact sequence
\[ 0 \ra \Pic^0 C \ra \Pic C \stackrel{\deg}{\ra} \Z \ra 0 \]
gives
\begin{equation}
\label{exseq}
\Z  \stackrel{\delta}{\ra} H^1(K,E) \ra H^1(K, \Pic C) \ra 0 
\end{equation}
with $\delta(1) = [C]$.
If $H^3(K,\Kbar^\times) = 0$ then from~\eqref{hs} and~\eqref{exseq}
we obtain the isomorphism $\Psi_C$.

\section{Cyclic extensions}

The definition of the map $\Psi_C$ in the last section simplifies
when we evaluate it on classes split by a cyclic extension $L/K$.
Let $G = \Gal(L/K)$ be generated by $\sigma$ of order $n$. We recall that
for $A$ a $G$-module, the Tate cohomology groups are
\[ \widehat{H}^0(G,A) = \frac{\ker(\Delta | A)}{\im(N | A)}
\quad \text{ and } \quad
    \widehat{H}^1(G,A) = \frac{\ker(N | A)}{\im(\Delta | A)} \]
where $\Delta = 1 - \sigma$ and $N = 1 + \sigma + \ldots + \sigma^{n-1}$
satisfy $\Delta N = N \Delta = 0$ in $\Z[G]$.

For $b \in K^\times$ there is a cyclic $K$-algebra with basis 
$1,v, \ldots, v^{n-1}$ as an $L$-vector space, and multiplication 
determined by $v^n = b$ and $v x = \sigma(x) v$ for all $x \in L$.
We write $(L/K,b)$ for the class of this algebra in
$\Br(K)=H^2(K,\Kbar^\times)$.
Likewise if $f \in K(C)^\times$ then $(L/K,f)$ is an element of
$H^2(K,\Kbar(C)^\times )$.
\begin{Lemma}
\label{lem:cyc}
Suppose $\Xi \in \Div_L^0 C$ with $N_{L/K}(\Xi) =  \divv (f)$ for some
$f \in K(C)^\times$.
If $\xi$ is the image of $\Xi$ under 
\[ \widehat{H}^1(G,\Pic_L^0 C ) \isom H^1(G,\Pic_L^0 C )
  \stackrel{\rm inf}{\ra} H^1(K,E) \]
then $\Psi_C(\xi) = (L/K,f)$.
\end{Lemma}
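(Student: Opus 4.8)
The plan is to unwind the definition of $\Psi_C$ from Section~\ref{sec:ctp} and express it concretely in terms of the cyclic algebra cocycle. Recall that $\Psi_C$ was built from the connecting maps in the exact sequence~\eqref{hs}, namely $\Br(C) \to H^1(K,\Pic C)$, composed with the isomorphism coming from~\eqref{exseq}. So to prove $\Psi_C(\xi) = (L/K,f) \bmod \Br(K)$ I must produce an explicit element $A \in \Br(C) \subset H^2(K,\Kbar(C)^\times)$ whose image in $H^1(K,\Pic C)$ under the diagram chase equals $\xi$, and then verify that $A$ can be taken to be the class $(L/K,f)$ of the cyclic algebra over the function field.

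First I would set up the standard description of the cyclic algebra class in Tate-cohomological terms: for a cyclic group $G = \langle \sigma \rangle$ of order $n$, the cup product with the canonical generator of $H^2(G,\Z)$ identifies $\widehat{H}^0(G,M)$ with $\widehat{H}^2(G,M)$, and this is precisely the mechanism by which $(L/K,f)$ arises from $f \in K(C)^\times = (\Kbar(C)^\times)^G$ viewed in $\widehat{H}^0$. Concretely, the inflation of this periodicity isomorphism sends the class of $f$ to the Brauer class $(L/K,f)$ in $H^2(K,\Kbar(C)^\times)$. Second, I would trace $\Xi$ through the top row: since $N_{L/K}(\Xi) = \divv(f)$, the divisor $\Xi$ lies in $\ker(N)$ modulo principal divisors, so its class in $\widehat{H}^1(G,\Pic^0_L C)$ is well defined, and under the same cyclic periodicity the pair $(\Xi, f)$ glues to a well-defined cohomology class. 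The hypothesis $N_{L/K}(\Xi) = \divv(f)$ is exactly the compatibility condition making the cyclic algebra class $(L/K,f)$ land in $\Br(C)$ rather than merely $H^2(K,\Kbar(C)^\times)$, because it forces the image in $H^2(K,\Div C)$ to vanish.

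The core of the argument is then a compatibility check: that the connecting map $\Br(C) \to H^1(K,\Pic C)$ carries the cyclic class $(L/K,f)$ to the inflation of the Tate class of $\Xi$. I would do this by choosing explicit cochain representatives adapted to the cyclic structure — writing the $2$-cocycle for $(L/K,f)$ using the standard factor set supported on powers of $\sigma$, lifting it along $\Kbar(C)^\times \to \Div C$ using a divisor cochain built from $\Xi$, and reading off the resulting $1$-cocycle in $\Div C$ (hence in $\Pic C$). The hypothesis on $f$ guarantees the lift exists and the diagram chase closes up. I expect the main obstacle to be bookkeeping: matching the sign and indexing conventions of the Tate-cohomology periodicity isomorphism $\widehat{H}^1(G,\Pic^0_L C) \cong H^1(G,\Pic^0_L C)$ against the connecting homomorphisms in~\eqref{hs} and~\eqref{exseq}, so that the composite really is $\Psi_C$ and not its inverse or negative. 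Once the cochain-level diagram commutes, the identity $\Psi_C(\xi) = (L/K,f)$ in $\Br(C)/\Br(K)$ follows immediately.
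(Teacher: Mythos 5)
Your proposal is correct and follows essentially the same route as the paper: both arguments reduce to Tate cohomology of the cyclic group $G$, use the periodicity identification $\widehat{H}^0(G,L(C)^\times)\isom H^2(G,L(C)^\times)$ to realise the cyclic algebra class, and identify the relevant connecting homomorphism as $\Xi \mapsto f$ via the hypothesis $N_{L/K}(\Xi)=\divv(f)$. The paper simply runs the construction of $\Psi_C$ forwards at the level of $G$-cohomology (so no explicit cochain bookkeeping is needed), whereas you verify the same diagram chase in the reverse direction.
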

\begin{proof}
We follow the construction of $\Psi_C$ in Section~\ref{sec:ctp}.
We start with the exact sequence of $G$-modules
\[ 0 \to L^\times \to L(C)^\times \to \Div_L C \to \Pic_L C \to 0. \]
Splitting into short exact sequences, and taking Galois cohomology,
gives a diagram as before. The connecting map
\[ \widehat{H}^1(G,\Pic_L C) 
    \to \widehat{H}^0(G,L(C)^\times/L^\times )  \]
is now given by $\Xi \mapsto f$. Therefore
$\Psi_C(\xi)$ is the image of $f$ under the map
\[ \frac{K(C)^\times}{N_{L/K}(L(C)^\times)}
 = \widehat{H}^0(G,L(C)^\times ) \isom H^2(G,L(C)^\times )
  \stackrel{\rm inf}{\ra} H^2(K,\Kbar(C)^\times ). \]
This is the cyclic algebra $(L/K,f)$ as required.
\end{proof}

\section{Pairs of binary quartics and $(2,2)$-forms}

Let $C$ be a smooth curve of genus one. 
First suppose, as in Section~\ref{sec:bq}, that
$C$ is defined by a binary
quartic $g$. 
Then $C \to \PP^1$ is a double cover ramified over the $4$ roots of $g$. 
We write $H$ for the hyperplane
section (i.e., fibre of the map $C \to \PP^1$), and $\iota$ for 
the involution on $C$ with $Q + \iota(Q) \sim H$ for all $Q \in C$.

Next we suppose that $C \subset \PP^1 \times \PP^1$ 
is defined by a $(2,2)$-form, i.e., a polynomial 
$f(x_1,z_1;x_2,z_2)$ that is 
homogeneous of degree $2$ in each of the sets of variables $x_1$,$z_1$ 
and $x_2$,$z_2$. Projecting
$C$ to either factor gives a double cover of $\PP^1$. 
The corresponding binary quartics are obtained 
by writing $f$ as a binary
quadratic form in one of the sets of variables, and taking its 
discriminant. We write $\pr_1, \pr_2 : C \to \PP^1$ for the
projection maps. Let $H_1, H_2$ and $\iota_1, \iota_2$
be the corresponding hyperplane sections and involutions.

\begin{Lemma} 
\label{lem:22}
Let $C = \{ f(x_1,z_1;x_2,z_2) = 0 \} \subset \PP^1 
\times \PP^1$ as above.
\begin{enumerate}
\item The composite $\iota_1 \iota_2$ is translation by 
some $P \in E = \Jac(C)$. Moreover the isomorphism
$\Pic^0(C) \isom E$ sends $[H_1 - H_2] \mapsto P$.
\item If $H_i = \pr_i^*(1:0)$ then 
\[ \divv ( f(x_1,z_1;1,0)/z_1^2 ) = H_2 + \iota_1^*H_2 - 2 H_1. \]
\end{enumerate}
\end{Lemma}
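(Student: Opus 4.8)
The plan is to handle the two parts separately, treating (i) as a computation with the group law and (ii) as a divisor computation on the double cover $\pr_1$.

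For (i), I would first fix a point $O \in C(\Kbar)$ and identify $E = \Jac(C) = \Pic^0(C)$ with $C$ via $Q \mapsto [Q - O]$. The defining property $Q + \iota_i(Q) \sim H_i$ then translates into $[Q-O] + [\iota_i(Q) - O] = [H_i - 2O]$ in $\Pic^0(C)$, so that $\iota_i$ acts on $E$ as $x \mapsto t_i - x$ with $t_i = [H_i - 2O]$. Composing, $\iota_1 \iota_2$ acts as $x \mapsto x + (t_1 - t_2)$, which is translation by $t_1 - t_2 = [H_1 - H_2]$. This proves both assertions of (i) at once, since the translation amount is by definition the image of $[H_1 - H_2]$ under $\Pic^0(C) \isom E$. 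Note that deriving $\iota_i \colon x \mapsto t_i - x$ directly from the divisor relation avoids any appeal to the classification of automorphisms; this part is routine bookkeeping and presents no real obstacle.

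For (ii), I would write $f = A x_2^2 + B x_2 z_2 + C z_2^2$, where $A,B,C \in \Kbar[x_1,z_1]$ are binary quadratic forms, so that $f(x_1,z_1;1,0) = A(x_1,z_1)$ and the function in question is $A/z_1^2$, the pullback under $\pr_1$ of a degree-$0$ rational function on $\PP^1$. Its divisor on $\PP^1$ is $[\rho_1] + [\rho_2] - 2(1:0)$, where $\rho_1,\rho_2$ are the roots of $A$ (automatically accounting for cancellation if $(1:0)$ is itself a root); pulling back via the degree-$2$ map $\pr_1$ gives $\pr_1^*[\rho_1] + \pr_1^*[\rho_2] - 2H_1$. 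It then remains to identify $\pr_1^*[\rho_1] + \pr_1^*[\rho_2]$ with $H_2 + \iota_1^* H_2$.

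This identification is the crux of the argument, and I would make it by analysing the fibres of $\pr_1$ over the roots of $A$. Setting $z_2 = 0$ in $f = 0$ forces $A(x_1,z_1) = 0$, so $H_2 = \{z_2 = 0\} \cap C$ is supported exactly over $\rho_1,\rho_2$. Over such a root the restriction of $f$ to the fibre factors as $z_2(B x_2 + C z_2)$, since the $x_2^2$-coefficient $A$ vanishes there; this exhibits the two points of $\pr_1^{-1}(\rho)$, one with $z_2 = 0$ lying on $H_2$ and one other. Because $\iota_1$ is the deck involution of $\pr_1$ and so interchanges the two points of each fibre, the second point is the $\iota_1$-image of the first. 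Summing over $\rho_1,\rho_2$ yields $\pr_1^*[\rho_1] + \pr_1^*[\rho_2] = H_2 + \iota_1^* H_2$, whence the claimed formula. The only points requiring care are the multiplicity bookkeeping when $A$ has a repeated root, and the identity $\iota_1^* H_2 = \iota_{1*} H_2$, which is immediate since $\iota_1^2 = 1$.
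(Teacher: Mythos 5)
Your proof is correct. Part (i) is essentially the paper's argument: the paper simply subtracts the two relations $\iota_2 Q + \iota_1\iota_2 Q \sim H_1$ and $Q + \iota_2 Q \sim H_2$ to get $[\iota_1\iota_2 Q - Q] = [H_1 - H_2]$ for all $Q$, which is your base-point computation without the base point; the content is identical. Part (ii) takes a genuinely different, more computational route. The paper argues softly: the function $f(x_1,z_1;1,0)/z_1^2$ factors through $\pr_1$, hence its divisor is $\iota_1$-invariant; it vanishes on the support of $H_2$, has double poles exactly on the support of $H_1$, and has degree $0$, so the divisor is forced to be $H_2 + \iota_1^* H_2 - 2H_1$. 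You instead compute the divisor outright as $\pr_1^*\bigl(\divv_{\PP^1}(A/z_1^2)\bigr)$ and then identify $\pr_1^*[\rho_1] + \pr_1^*[\rho_2]$ with $H_2 + \iota_1^* H_2$ by factoring $f$ on each fibre over a root of $A$. Your version is longer but makes the multiplicity bookkeeping explicit, and in particular it handles the degenerate configurations (repeated roots of $A$, a point of $H_2$ that is a ramification point of $\pr_1$, or $(1{:}0)$ a root of $A$) more transparently than the paper's counting argument, which is really only spelled out in the generic case; the paper's version buys brevity. One small point to watch in your write-up: you should note that $H_2$, as the divisor $\pr_2^*(1{:}0)$, is supported exactly on the points over the roots of $A$ with the multiplicities of those roots, which is what makes the sum over $\rho_1,\rho_2$ come out to $H_2 + \iota_1^* H_2$ rather than just a divisor with the same support.
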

\begin{proof} 
(i) If $Q \in C$ then $\iota_2 Q + \iota_1 \iota_2 Q \sim H_1$
and $Q + \iota_2 Q \sim H_2$. Subtracting one from the other gives 
$[\iota_1 \iota_2 Q - Q ] = [H_1 - H_2]$ as required. \\
(ii) The specified rational function on $C$ factors via $\pr_1$
and is therefore invariant under pull back by $\iota_1$. It has
a zero at each point in the support of $H_2$, and a double pole
at each point in the support of $H_1$. Since there are no other
poles, and the divisor has degree $0$, it must therefore be as
stated.
\end{proof} 

\begin{Remark}
Lemma~\ref{lem:22}(i) is closely related to Poncelet's Porism, as 
described in \cite{GH}. Our use of $(2,2)$-forms was inspired by
the treatment in \cite{BH}.
\end{Remark}
We write $\disc_k(f)$ for the discriminant of 
$f$ when it is viewed as a binary quadratic form
in the $k$th set of variables.

\begin{Lemma}
\label{prop:22}
Let $C = \{ f(x_1,z_1;x_2,z_2) = 0 \} \subset \PP^1 
\times \PP^1$ as above.
Let $a \in K^\times$ and let $C_1$, $C_2$ be the following quadratic
twists of $C$.
\begin{align*}
C_1: \qquad a y^2 &= \disc_2(f) \\
C_2: \qquad a y^2 &= \disc_1(f) 
\end{align*}
Then $\Psi_{C_1}([C_2]) = (K(\sqrt{a})/K, f(x_1,z_1;1,0)/z_1^2)$.
\end{Lemma}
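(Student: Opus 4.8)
The plan is to apply Lemma~\ref{lem:cyc} with the cyclic extension $L = K(\sqrt{a})/K$ (of degree $2$, with $G = \langle \sigma \rangle$), reducing the computation of $\Psi_{C_1}([C_2])$ to identifying an appropriate divisor class $\xi \in H^1(K,E)$ together with a rational function $f$ witnessing the relevant norm relation. The key observation is that $C_1$ and $C_2$ both become isomorphic to $C$ over $L$ (since over $L$ we may set $\sqrt{a}\,y = (\text{the binary quadratic form})$, killing the twist), so the Galois action on divisors and Picard groups is governed by $G$ acting through the twisting cocycle. I would set up the identifications $\Pic_L^0 C_1 \cong \Pic_L^0 C \cong E$ compatibly with these isomorphisms.

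\emph{Identifying the class.} First I would determine which class in $H^1(K,E)$ corresponds to $[C_2]$. Under the isomorphism $\Psi_{C_1}$ from~\eqref{PsiC}, the class $[C_2] \in H^1(K,E)$ should be the image of a canonical divisor difference on $C_1$. The natural candidate is $[H_1 - H_2]$, which by Lemma~\ref{lem:22}(i) maps to the point $P \in E$ realizing the translation $\iota_1\iota_2$; the quadratic twist structure forces this class to be the one split by $L$. I would verify that $\Xi = H_1 - H_2$ (or a suitable representative pulled back to $C_1$) lies in $\Div_L^0 C_1$ and that its image $\xi$ under the composite $\widehat{H}^1(G,\Pic_L^0 C_1) \cong H^1(G,\Pic_L^0 C_1) \stackrel{\mathrm{inf}}{\ra} H^1(K,E)$ is exactly $[C_2]$.

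\emph{Producing the rational function.} The heart of the argument is to exhibit $f(x_1,z_1;1,0)/z_1^2$ as the function $f$ appearing in Lemma~\ref{lem:cyc}, i.e. to check that $N_{L/K}(\Xi) = \divv\big(f(x_1,z_1;1,0)/z_1^2\big)$. Here Lemma~\ref{lem:22}(ii) does the decisive work: it computes $\divv\big(f(x_1,z_1;1,0)/z_1^2\big) = H_2 + \iota_1^* H_2 - 2H_1$. I would match this against $N_{L/K}(\Xi) = \Xi + \sigma(\Xi)$, using that $\sigma$ acts as the twisting involution (essentially $\iota_1$, coming from the quadratic twist by $a$ on the $\disc_2$-side). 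The computation $\Xi + \sigma\Xi = (H_1 - H_2) + (\iota_1^* H_1 - \iota_1^* H_2)$ should, after noting $\iota_1^* H_1 \sim H_1$, reduce precisely to $-(H_2 + \iota_1^* H_2 - 2H_1)$ up to sign and linear equivalence, confirming the norm relation.

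\textbf{The main obstacle} I anticipate is bookkeeping the identification of the Galois/twisting action $\sigma$ with the geometric involution $\iota_1$ correctly, and pinning down signs and orientations so that $[C_2]$ (rather than $-[C_2]$ or the class of $C_1$ itself) emerges. Specifically, one must be careful that the quadratic twist by $a$ on $C_1: ay^2 = \disc_2(f)$ induces the action of $G = \Gal(L/K)$ on $\Pic_L^0 C$ whose associated cocycle represents $[C_2]$ and not some other class; this is where the asymmetry between the two projections ($\disc_1$ versus $\disc_2$, and hence $C_1$ versus $C_2$) must be tracked with care. Once that dictionary is fixed, Lemma~\ref{lem:22} and Lemma~\ref{lem:cyc} combine to give the stated formula $\Psi_{C_1}([C_2]) = (K(\sqrt{a})/K,\, f(x_1,z_1;1,0)/z_1^2)$ essentially immediately.
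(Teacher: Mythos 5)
Your proposal is correct and follows essentially the same route as the paper: apply Lemma~\ref{lem:cyc} to the divisor $\Xi = \pm(H_1 - H_2)$ transported to $C_1$, using Lemma~\ref{lem:22}(i) to identify its class with the twisting cocycle of $C_2$ and Lemma~\ref{lem:22}(ii) for the norm relation, with all sign ambiguities harmless because $|G|=2$. The one step you defer --- that $\sigma$ acts through $\iota_1$ and that $\xi$ is indeed $[C_2]$ modulo $[C_1]$ (which $\Psi_{C_1}$ kills) --- is exactly the computation $\sigma(\phi_i)=\phi_i\iota_i$, hence $\sigma(\phi)\phi^{-1}=\phi_1\iota_1\iota_2\phi_1^{-1}$, that the paper carries out.
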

\begin{proof}
If $a \in (K^\times)^2$ then $C_1$ and $C_2$ are isomorphic over
$K$ and so by~\eqref{PsiC} we have $\Psi_{C_1}([C_2]) = 0$.
We may therefore suppose that $a \not\in (K^\times)^2$.
Let $L = K(\sqrt{a})$ and $G = \Gal(L/K) = \{1, \sigma\}$. 
We claim there is a divisor $\Xi \in \Div_L^0(C_1)$ such that
\begin{enumerate}
\item $C_2$ is the twist of $C_1$ by the class of $\Xi$ in 
$\widehat{H}^1(G,\Pic_L^0(C_1))$, and
\item $N_{L/K} (\Xi) = \divv (f(x_1,z_1;1,0)/z_1^2)$.
\end{enumerate}
Then by Lemma~\ref{lem:cyc} we have
$\Psi_{C_1}([C_2] - [C_1]) = (L/K, f(x_1,z_1;1,0)/z_1^2)$. Since
$\Psi_{C_1}$ is a group homomorphism and $\Psi_{C_1}([C_1]) = 0$ this 
proves the lemma.

We construct $\Xi$ as follows. We factor the projection map
$\pr_i : C \to \PP^1$ as
\[ C \stackrel{\phi_i}{\ra} C_i \stackrel{\xi_i}{\ra} \PP^1 \]
where $\phi_i$ is the quadratic twist map (an isomorphism defined over $L$),
and $\xi_i = (x_i:z_i)$ is the natural double cover.
Let $D_i = \xi_i^*(1:0)$ and $H_i = \phi_i^* D_i = \pr_i^*(1:0)$.
We put $\phi = \phi_1 \phi_2^{-1}$ and $\Xi = \phi_* D_2 - D_1$.
We now prove (i) and (ii).

(i) Let $\iota_1$ and $\iota_2$ be the involutions on $C$ defined
before Lemma~\ref{lem:22}. Since $\sigma(\phi_1) = \phi_1 \iota_1$
and $\sigma(\phi_2) = \phi_2 \iota_2$ it follows that $\sigma(\phi)
\phi^{-1} = \phi_1 \iota_1 \iota_2 \phi_1^{-1}$. Identifying $C$ and
$C_1$ via $\phi_1$, and hence $\Xi$ with $H_2 - H_1$,
it follows by Lemma~\ref{lem:22} that $\sigma(\phi) \phi^{-1}$ is
translation by some $P \in E = \Jac(C_1)$, and the isomorphism
$\Pic^0(C_1) \isom E$ sends $[\Xi] \mapsto -P$. The minus sign
does not matter since $|G|=2$.

(ii) By Lemma~\ref{lem:22}(ii) with $H_1 = \phi_1^* D_1$ and
$H_2 = \phi_2^* D_2$ we have
\begin{align*}
  \divv(f(x_1,z_1;1,0)/z_1^2)
  &= \phi_{1*}(\phi_2^* D_2 + \iota_1^* \phi_2^* D_2 - 2 \phi_1^* D_1) \\
  &= \phi_* D_2 + \sigma(\phi_* D_2) - 2 D_1 \\
  &= N_{L/K} (\Xi). \qedhere
\end{align*}
\end{proof}

\section{Triples of binary quartics and $(2,2,2)$-forms}

\label{sec:222}

Let $E/K$ be an elliptic curve. An $n$-covering of $E$ is a pair
$(C,\nu)$ where $C$ is a smooth curve of genus one, and $\nu : C \to E$
is a morphism, such that, for some choice of isomorphism 
$\psi : C \to E$ defined over $\Kbar$,
there is a commutative diagram
\[ \xymatrix{ C \ar[d]_\psi \ar[dr]^\nu \\ E \ar[r]_{\times n} &  E } \]
The $n$-coverings of $E$ are parametrised by $H^1(K,E[n])$.

Suppose that $C_1$, $C_2$, $C_3$ are $2$-coverings of $E$ 
that sum to zero in $H^1(K,E[2])$. We pick isomorphisms
$\psi_i : C_i \to E$ as above, and let $\eps_i = (\sigma \mapsto
\sigma(\psi_i) \psi_i^{-1})$ be the corresponding cocycle in 
$Z^1(\Gal(\Kbar/K),E[2])$. Our hypothesis is that $\eps_1 + \eps_2 + \eps_3$
is a coboundary. However, by adjusting the choice of $\psi_3$, we may
suppose that $\eps_1 + \eps_2 + \eps_3 = 0$. It may then be checked that
the morphism
\begin{align*}
\mu : C_1 \times C_2 \times C_3 & \to E \\
(P_1,P_2,P_3) & \mapsto \psi_1(P_1) + \psi_2(P_2) + \psi_3(P_3) 
\end{align*}
is defined over $K$.

\begin{Remark} 
\label{rem:many}
We are still free to replace $\psi_3$ 
by $P \mapsto \psi_3(P) + T$ for $T \in E(K)[2]$, and for this reason
there are $\# E(K)[2]$ choices for the map $\mu$.
\end{Remark} 

Suppose further that $C_1$, $C_2$, $C_3$ are defined by binary quartics
$g_1$, $g_2$, $g_3$ with the same invariants $I$ and $J$. Let
$\pi : C_1 \times C_2 \times C_3 \to \PP^1 \times \PP^1 \times \PP^1$
be the map that projects to the $x$-coordinates. Then $S = \pi(\mu^{-1}(0_E))$
is a surface in $\PP^1 \times \PP^1 \times \PP^1$. Geometrically
it is the Kummer surface $(E \times E)/ \{\pm 1\}$.

We write $\disc_k(F)$ for the 
discriminant of a $(2,2,2)$ form $F$ when it is viewed as
a binary quadratic form in the $k$th set of variables.

\begin{Proposition}
\label{prop:222}
The surface $S \subset \PP^1 \times \PP^1 \times \PP^1$ 
is defined by a $(2,2,2)$-form $F$. Moreover we may scale
$F$ so that it has coefficients in $K$, and for all permutations
$i,j,k$ of $1,2,3$ we have $\disc_k(F) = g_i g_j$. 
\end{Proposition}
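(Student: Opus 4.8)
The plan is to work first over $\Kbar$, where the isomorphisms $\psi_i : C_i \to E$ turn everything into the group law on $E$, and then to descend to $K$. Identifying each $C_i$ with $E$ via $\psi_i$, the preimage $\mu^{-1}(0_E)$ becomes $\{(R_1,R_2,R_3)\in E^3 : R_1+R_2+R_3=0\}$, which is isomorphic to the abelian surface $E\times E$ via $(R_1,R_2)\mapsto(R_1,R_2,-R_1-R_2)$. Under this identification $\pi$ becomes $(R_1,R_2)\mapsto(\bar{x}_1(R_1),\bar{x}_2(R_2),\bar{x}_3(-R_1-R_2))$, where $\bar{x}_i:E\to\PP^1$ is the degree-two quotient attached to $g_i$, i.e.\ the one whose four branch points are the roots of $g_i$. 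Its generic fibre is the $\{\pm1\}$-orbit $\{(R_1,R_2),(-R_1,-R_2)\}$, so $S=\pi(\mu^{-1}(0_E))$ is the Kummer surface and $\pi$ is generically two-to-one onto it.

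First I would show that $S$ is cut out by a $(2,2,2)$-form. Projecting $S$ to the first two factors and fixing a generic $(x_1,z_1;x_2,z_2)$, the points of $E\times E$ above it are the four pairs $(\pm R_1,\pm R_2)$, whose images have third coordinate $\bar{x}_3(R_1\pm R_2)$, taking only two values. Hence this projection is two-to-one, so $S$ has degree two in $(x_3,z_3)$; by symmetry it has degree two in each set of variables, and is therefore defined by a single $(2,2,2)$-form $F$, unique up to scalar. Since $\mu$ and $\pi$ are defined over $K$, the surface $S$ is Galois-stable, the line spanned by $F$ is Galois-stable, and so $F$ may be rescaled to have coefficients in $K$.

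Next I would identify the discriminants. The form $\disc_3(F)$ is a section of $\OO(4,4)$ cutting out the branch locus of the two-to-one projection $S\to\PP^1\times\PP^1$, i.e.\ the locus where the two values $\bar{x}_3(R_1\pm R_2)$ collide. A short computation with the group law, in the spirit of Lemma~\ref{lem:22}, shows this happens exactly when $R_1$ or $R_2$ is a fixed point of $\iota_1$ or $\iota_2$, that is, when $x_1$ is a root of $g_1$ or $x_2$ a root of $g_2$. Thus $\disc_3(F)$ vanishes simply along the four fibres $g_1(x_1,z_1)=0$ and the four fibres $g_2(x_2,z_2)=0$; as this reduced divisor already has class $(4,4)$, the nonzero $(4,4)$-form $\disc_3(F)$ must be a scalar multiple of it, $\disc_3(F)=c_3\,g_1 g_2$ with $c_3\in K^\times$, and likewise $\disc_k(F)=c_k\,g_i g_j$.

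The hard part will be pinning down the constants. Scaling $F$ by $\lambda\in K^\times$ multiplies every $\disc_k(F)$ by $\lambda^2$, so to finish I must show that the three constants coincide, $c_1=c_2=c_3=:c$, and that $c$ is a square in $K^\times$; a final scaling by $c^{-1/2}\in K^\times$ then yields $\disc_k(F)=g_i g_j$ simultaneously while keeping $F$ over $K$. The equality of the $c_k$ I would obtain by comparing leading coefficients: writing $a_{pqr}$ for the coefficient of $x_1^p z_1^{2-p}x_2^q z_2^{2-q}x_3^r z_3^{2-r}$ in $F$, the coefficient of $x_1^4 x_2^4$ in $\disc_3(F)$ is $a_{221}^2-4a_{222}a_{220}$, to be matched against $a_1 a_2$ (the product of leading coefficients of $g_1,g_2$), and similarly in the other two directions; the fact that all three quartics share the invariants $I$ and $J$ should force these expressions to agree. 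That the common constant is a square, so that the normalisation stays $K$-rational, is the genuinely delicate point and is where I expect the main obstacle to lie. I would settle it either by exhibiting an explicit $K$-rational formula for $F$ assembled from the $\psi_i$ and the group law, or by tracing the descent datum to verify that the sign character which remains after imposing the $K$-rational condition $\disc_3(F)=g_1 g_2$ is in fact trivial.
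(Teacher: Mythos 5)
Your argument for the first part of the statement --- that $S$ is cut out by a single $(2,2,2)$-form $F$, unique up to scalar and hence rescalable to have coefficients in $K$, with $\disc_k(F) = c_k\, g_i g_j$ for some constants $c_k \in K^\times$ --- is sound and is essentially the paper's argument (the paper phrases the two-to-one property of the projections via the observation that fixing the $x$-coordinates of $P_1$ and $P_2$ leaves two choices for the $x$-coordinate of $P_3$, except at $2$-torsion). But the statement to be proved is precisely that one can take $c_1 = c_2 = c_3 = 1$, and this is the part you explicitly leave open. What you offer for it is not a proof: the leading-coefficient comparison gives formulae for each $c_k$ separately in terms of the coefficients of $F$, but no mechanism for comparing them with each other, and your appeal to ``the quartics share the invariants $I$ and $J$'' is a hope rather than an argument. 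Likewise you correctly isolate the need for the common constant to be a \emph{square} in $K^\times$ (so that the final rescaling stays $K$-rational), but you do not prove it. So there is a genuine gap, and it sits exactly at the point you identify as the main obstacle.

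For comparison, the paper disposes of both points quickly. For the squareness of $c_3$ (hence of each $c_k$ once they are shown equal): the function field $K(S)$ sits inside $K(C_1 \times C_2)$ as a quadratic extension of $K(\PP^1 \times \PP^1)$ with Kummer generator $\disc_3(F)/(z_1^4 z_2^4) = c_3\, g_1(x_1,z_1) g_2(x_2,z_2)/(z_1^4 z_2^4) = c_3 \left( y_1 y_2 / (z_1^2 z_2^2) \right)^2$; since this must be a square in $K(C_1 \times C_2)$ (the degree-$4$ extension $K(C_1\times C_2)/K(\PP^1\times\PP^1)$ contains $K(S)$), and $K$ is algebraically closed in that function field, $c_3 \in (K^\times)^2$. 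For the equality $c_1 = c_2 = c_3$: the ratios $c_k$ are unchanged under the action of $\SL_2(\Kbar)\times\SL_2(\Kbar)\times\SL_2(\Kbar)$, and since $g_1, g_2, g_3$ have the same invariants one can move all three to the single quartic $x^3 z - \tfrac13 I x z^3 - \tfrac1{27} J z^4$, after which the equality follows from the $S_3$-symmetry of the configuration. If you want to complete your write-up, these are the two missing ingredients; your plan of ``exhibiting an explicit $K$-rational formula for $F$'' would also work in principle (the paper does compute $F$ explicitly later, in Theorem~\ref{thm:m}), but is considerably heavier than needed here.
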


\begin{proof}
We first consider the special case where $C_1 = C_2 = C_3 = E$.
Suppose that $P_1,P_2,P_3 \in E$ satisfy $P_1 + P_2 + P_3 = 0_E$. If 
we specify the $x$-coordinates of $P_1$ and $P_2$, then in general
this leaves two possibilities for the $x$-coordinate of $P_3$.
The exceptional cases 
are when either $P_1$ or $P_2$ is a $2$-torsion point. 

Let $\Delta_i = \psi_i^{-1} (E[2])$ be the set of ramification points
for $C_i \to \PP^1$. We identify $\Delta_i$ with its image in $\PP^1$,
i.e., the set of roots of $g_i$. The observations in the last paragraph show
that when we project onto the $i$th and $j$th factors, 
$S \to \PP^1 \times \PP^1$
is a double cover ramified over $\Delta_i \times \PP^1$ and 
$\PP^1 \times \Delta_j$. This shows that $S$ is defined by 
a $(2,2,2)$-form $F$. Moreover $\disc_k(F) = \lambda_k g_i g_j$ for 
some $\lambda_1, \lambda_2, \lambda_3 \in K^\times$.
We claim that (i) $\lambda_3 \in (K^\times)^2$ and (ii) $\lambda_1 = 
\lambda_2 = \lambda_3$. It is then clear we may rescale $F$ so that
$\lambda_1 = \lambda_2 = \lambda_3=1$.

(i) Let $C_i$ have equation $y_i^2 = g_i(x_i,z_i)$. 
We note that $K(S) \subset K(C_1 \times C_2)$ is a quadratic extension
of $K(\PP^1 \times \PP^1)$ with Kummer generator
\[  \frac{\disc_3(F)}{z_1^4 z_2^4} = \frac{ \lambda_3  g_1(x_1,z_1) 
g_2(x_2,z_2)}{z_1^4 z_2^4} = \lambda_3 \left( \frac{ y_1 y_2 }
{z_1^2 z_2^2} \right)^2. \]
Since this is a square in $K(C_1 \times C_2)$ it follows
that $\lambda_3 \in (K^\times)^2$.

(ii) Since $g_1,g_2,g_3$ have the same invariants $I$ and $J$, 
we may reduce by the action of 
$\SL_2(\Kbar)\times\SL_2(\Kbar)\times\SL_2(\Kbar)$ to the case 
\[ g_1(x,z) = g_2(x,z) = g_3(x,z) = x^3 z - \tfrac{1}{3} I x z^3 
   - \tfrac{1}{27}J z^4. \]
The result then follows by symmetry.
\end{proof}

\begin{Corollary}
\label{cor}
Let $C_1$, $C_2$, $C_3$ and $F$ be as above. If $a = g_3(1,0) \not= 0$ then
\[ \Psi_{C_1}([C_2]) 
= (K(\sqrt{a})/K,F(x_1,z_1;1,0;1,0)/z_1^2). \]
\end{Corollary}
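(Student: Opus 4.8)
The plan is to deduce the corollary from Lemma~\ref{prop:22} by specialising the $(2,2,2)$-form $F$ in its third pair of variables. Concretely, I would set
\[ f(x_1,z_1;x_2,z_2) = F(x_1,z_1;x_2,z_2;1,0), \]
which is a $(2,2)$-form in the first two pairs of variables, cutting out the fibre $C' \subset \PP^1 \times \PP^1$ of the third projection $S \to \PP^1$ over the point $(1:0)$. Since $f(x_1,z_1;1,0) = F(x_1,z_1;1,0;1,0)$, the right-hand side of the asserted identity is literally the cyclic algebra produced by Lemma~\ref{prop:22} applied to $f$, so everything reduces to matching up the quadratic twists in that lemma with the curves $C_1$ and $C_2$.

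First I would compute the two relevant discriminants of $f$. Taking the discriminant of a binary quadratic form in one pair of variables commutes with substituting values for the remaining variables, so by Proposition~\ref{prop:222},
\[ \disc_2(f) = \disc_2(F)|_{(x_3,z_3)=(1,0)} = g_1(x_1,z_1)\, g_3(1,0) = a\, g_1(x_1,z_1), \]
and likewise $\disc_1(f) = a\, g_2(x_2,z_2)$. Hence, taking $a = g_3(1,0) \in K^\times$, the twists $C_1' : a y^2 = \disc_2(f)$ and $C_2' : a y^2 = \disc_1(f)$ appearing in Lemma~\ref{prop:22} become, after dividing through by $a$, precisely the curves $y^2 = g_1$ and $y^2 = g_2$, that is $C_1$ and $C_2$. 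Lemma~\ref{prop:22} then yields $\Psi_{C_1}([C_2]) = (K(\sqrt{a})/K,\, f(x_1,z_1;1,0)/z_1^2)$, which is the claim.

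The step I expect to need the most care is verifying that $f$ genuinely satisfies the hypotheses of Lemma~\ref{prop:22}, namely that $C' = \{f = 0\}$ is a smooth curve of genus one whose two projections are the double covers ramified over the roots of $g_1$ and of $g_2$. This is exactly where the hypothesis $a = g_3(1,0) \neq 0$ enters: it says $(1:0) \notin \Delta_3$, so $(1:0)$ is not a critical value of the third projection and the fibre $C'$ contains none of the singular points of the Kummer surface $S$; thus $C'$ is smooth. (Equivalently, one sees that $C'$ is a smooth genus-one $(2,2)$-curve because its associated binary quartics $\disc_2(f) = a g_1$ and $\disc_1(f) = a g_2$ have nonzero discriminant, using $4I^3 - J^2 \neq 0$.) Its two projections are then ramified over the roots of $a g_1$ and $a g_2$, which are just the roots of $g_1$ and $g_2$; this forces the identifications $C_1' = C_1$ and $C_2' = C_2$, and the corollary follows.
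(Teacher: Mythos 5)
Your proposal is correct and follows essentially the same route as the paper: specialise $F$ at $(x_3:z_3)=(1:0)$ to get a $(2,2)$-form $f$, use Proposition~\ref{prop:222} to compute $\disc_1(f)=a\,g_2$ and $\disc_2(f)=a\,g_1$, identify the twists in Lemma~\ref{prop:22} with $C_1$ and $C_2$, and apply that lemma. Your extra paragraph checking that the fibre is a smooth genus one curve (so that Lemma~\ref{prop:22} genuinely applies) is a detail the paper leaves implicit, but it does not change the argument.
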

\begin{proof}
  We put $f(x_1,z_1;x_2,z_2) = F(x_1,z_1;x_2,z_2;1,0)$.
  By Proposition~\ref{prop:222} we have $\disc_1(f) = a g_2(x_2,z_2)$
  and $\disc_2(f) = a g_1(x_1,z_1)$. The curves $C_1$ and $C_2$ are
  therefore isomorphic to those considered in Lemma~\ref{prop:22}.
  Applying Lemma~\ref{prop:22} gives the result.
\end{proof} 

\section{Computing the $(2,2,2)$-forms}

To complete the proof of Theorem~\ref{thm:main} we must 
explain how to compute the $(2,2,2)$-form $F$.
As before it is helpful to first consider the special case
where $C_1 = C_2 = C_3 = E$.

Let $E$ be the elliptic curve $y^2 = x^3 + a x + b$. We consider the maps
\[ \xymatrix{ E \times E \times E \ar[r]^-{\mu} \ar[d]^\pi & E \\
\PP^1 \times \PP^1 \times \PP^1 } \]
where $\mu(P_1,P_2,P_3) = P_1 + P_2 + P_3$ and $\pi$ is the map taking
the $x$-coordinate of each point.
An equation for $S = \pi (\mu^{-1} (0_E))$
is computed as follows. 

Let $P_i=(x_i,y_i)$ for $i=1,2,3$ be points on $E$ with
$P_1+P_2+P_3 = 0_E$. These points lie on a line, say $y = \lambda x + \nu$. 
Then as polynomials in $x$ we have
\[ x^3 + a x + b - (\lambda x + \nu)^2 = (x-x_1)(x-x_2)(x-x_3). \]
Comparing the coefficients of the powers of $x$ we obtain
\begin{align*}
\lambda^2 &= s_1, \\
2 \lambda \nu &= a - s_2, \\
\nu^2 &=  b + s_3, 
\end{align*}
where $s_1, s_2,s_3$ are the elementary symmetric polynomials in 
$x_1,x_2,x_3$. Eliminating $\lambda$ and $\nu$ gives the equation
\[ (a-s_2)^2 - 4 s_1 (b+s_3) = 0.  \]
The required $(2,2,2)$-form $F$ is obtained by homogenising this
equation, i.e. we replace $x_i$ by $x_i/z_i$ and multiply 
through by $z_1^2 z_2^2 z_3^2$.

\begin{Remark}
\label{rem:W}
We have $F(x_1,1;x_2,1;x_3,1) = W_0 x_3^2 - W_1 x_3 + W_2$ where 
\begin{align*}
W_0 & = (x_1 - x_2)^2, \\
W_1 & = 2 (x_1 x_2 + a) (x_1+x_2) + 4b, \\
W_2 & = x_1^2 x_2^2 - 2 a x_1 x_2 - 4 b (x_1+x_2) + a^2.
\end{align*}
These are the formulae used in \cite[Chapter 17]{CaL}
to show that the height on an elliptic curve is a quadratic form.
\end{Remark}

We now turn to the general case. So let 
$S \subset \PP^1 \times \PP^1 \times \PP^1$ 
be as in Section~\ref{sec:222}. 
Let $z(g_i)$ be the cubic invariant, and write $H_1,H_2,H_3$ 
for the binary quadratic forms~\eqref{def:H} over $L = K[\varphi]$ 
associated to $g_1,g_2,g_3$. 

\begin{Theorem}
\label{thm:m}
If $z(g_1) z(g_2) z(g_3) = m^2$ for some $m \in L^\times$, and 
\begin{equation}
\label{get222}
 \frac{H_1 H_2 H_3}{m} = F_0 + F_1 \varphi + F_2 \varphi^2 
\end{equation}
where $F_0,F_1,F_2$ are $(2,2,2)$-forms defined over $K$, 
then $S$ has equation $F_2 = 0$.
\end{Theorem}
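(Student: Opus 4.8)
The plan is to prove the equivalent statement that $F_2$ is a nonzero scalar multiple of the $(2,2,2)$-form $F$ defining $S$ in Proposition~\ref{prop:222}, and to do this by reducing to the special case $C_1 = C_2 = C_3 = E$ treated at the start of this section, exactly as in the proof of Proposition~\ref{prop:222}(ii). The key inputs are the relation $H_i^2 = z(g_i) G_i$, where $G_i = \frac13(4 \varphi g_i + h_i)$ is the singular member of the pencil spanned by $g_i$ and its Hessian, together with the fact that $\varphi$, being a root of $X^3 - 3IX + J$, is fixed by the $\SL_2(\Kbar)$-action preserving $I$ and $J$.

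First I would check that $\Phi := H_1 H_2 H_3/m$ transforms covariantly under $\SL_2(\Kbar)^3$ acting by substitution on the three sets of variables. Since $G_i$ is a covariant, $G_i \mapsto G_i \circ M_i$, while $z(g_i) = G_i(1,0)$ is \emph{not} invariant; comparing the identities $(H_i \circ M_i)^2 = z(g_i)(G_i \circ M_i)$ and $(H_i^{\mathrm{new}})^2 = z(g_i^{\mathrm{new}})(G_i \circ M_i)$ shows that $H_i$ acquires a factor $\sqrt{z(g_i^{\mathrm{new}})/z(g_i)}$. These three factors multiply to $\pm\, m^{\mathrm{new}}/m$, which cancels the $1/m$, so $\Phi^{\mathrm{new}} = \pm\, \Phi \circ (M_1,M_2,M_3)$. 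As $\varphi$ is fixed, the same covariance holds for the coefficient $F_2$ of $\varphi^2$, and this matches the way the defining form of $S$ transforms. The statement is therefore invariant under $\SL_2(\Kbar)^3$, and since over $\Kbar$ the binary quartics with invariants $I$, $J$ and $\Delta \not= 0$ form a single orbit, I may assume $g_1 = g_2 = g_3$ is the standard quartic of Proposition~\ref{prop:222}(ii), i.e. $C_1 = C_2 = C_3 = E$.

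In this special case $z(g_i) = 1$, so $m = \pm 1$, and I would write down $H$ explicitly from~\eqref{def:H}, expand the product $H(x_1,z_1)H(x_2,z_2)H(x_3,z_3)$, reduce modulo $\varphi^3 - 3I\varphi + J$, and read off the coefficient $F_2$ of $\varphi^2$. Comparing with the explicit $(2,2,2)$-form $F$ of Remark~\ref{rem:W} (the homogenisation of $(a-s_2)^2 - 4 s_1(b+s_3)$), I would verify that $F_2$ is a nonzero multiple of $F$; the scalar is pinned down to $1$ by noting that both forms have $\disc_k = g_i g_j$, by Proposition~\ref{prop:222}. This is the main computational step. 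Setting $(x_2,z_2)=(x_3,z_3)=(1,0)$ and using $H(1,0)=z(g)$ recovers the left-hand side of~\eqref{getgamma}, so the same computation simultaneously identifies $\gamma_1 = F_2(x_1,z_1;1,0;1,0)$, as used in Theorem~\ref{thm:main}.

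Finally I would address the dependence on $m$. By Remark~\ref{rem:many} there are $\#E(K)[2]$ choices for the map $\mu$, hence for the surface $S$, and correspondingly there are several square roots $m$ of $z(g_1)z(g_2)z(g_3)$, differing by the sign classes in $L^\times$ coming from $E(K)[2]$. I would set up the matching between these choices so that each admissible $m$ produces the $(2,2,2)$-form defining the surface $S$ for the corresponding $\mu$; this yields the claim of Remark~\ref{rems}(vi) that any one of the $m$ may be used. The main obstacle is the covariance step: one must check carefully that the square-root scalings introduced by the non-invariance of $z(g_i)$ really do cancel against $1/m$ (up to an irrelevant global sign), since it is this cancellation that lets the single-orbit reduction go through.
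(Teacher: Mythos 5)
Your strategy---reduce to $g_1=g_2=g_3$ in standard form via $\SL_2(\Kbar)^3$-equivariance and then verify the identity by an explicit computation against Remark~\ref{rem:W}---is genuinely different from the paper's proof. The paper never reduces to the diagonal case: it uses the covering-map formula $\xi_i = 3h_i/(4g_i)$ together with the collinearity of $Q_1,Q_2,Q_3$ to show that on $\mu^{-1}(0_E)$ the element $H_1H_2H_3/m$ equals a constant times $y_1y_2y_3(\nu-3\lambda\varphi)$, which is linear in $\varphi$, so the coefficient $F_2$ of $\varphi^2$ vanishes on $S$; the non-vanishing of $F_2$ is then a separate argument (the octahedron computation). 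Your route would, if completed, deliver the non-vanishing for free, which is a genuine advantage.

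However, there is a real gap at the step you yourself call ``the main obstacle'', and it is more than ``an irrelevant global sign''. Working over $\Kbar$, where $L\otimes_K\Kbar\cong\Kbar^3$, your constants $c_i$ with $c_i^2 = z(g_i^{\mathrm{new}})/z(g_i)$ lie in $(\Kbar^3)^\times$, and all you can say is $c_1c_2c_3 = u\cdot m^{\mathrm{new}}/m$ for some $u\in\mu_2(\Kbar^3)=\{\pm1\}^3$. If $u$ is not globally $\pm1$, then writing $u = a+b\varphi+c\varphi^2$ and multiplying $F_0+F_1\varphi+F_2\varphi^2$ by $u$ replaces $F_2$ by a nontrivial $K$-linear combination of $F_0,F_1,F_2$, which cuts out a \emph{different} surface---precisely the translate $\pi(\mu_T^{-1}(0_E))$ for the $T\in E[2]$ corresponding to $u$. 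So as written your reduction proves only that $F_2=0$ is \emph{one of} the four $E[2]$-translates of the Kummer surface, not the particular $S$ attached to the chosen $K$-rational $\mu$ and the chosen $m\in L^\times$. Closing this requires exhibiting the (Galois-equivariant) bijection between square roots $m$ modulo global sign and the choices of $\mu$ in Remark~\ref{rem:many}; this is exactly the correspondence the paper gets for free from comparing $\prod(\xi_i+3\varphi)=\bigl(27H_1H_2H_3/(8my_1y_2y_3)\bigr)^2$ with $(\nu-3\lambda\varphi)^2$ in $L(S)$. Your deferred ``matching'' in the final paragraph is therefore not a tidy-up but the crux of the proof, and without it the theorem as stated is not established. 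The remaining ingredients of your plan (single $\SL_2(\Kbar)$-orbit, covariance of $G$, the diagonal computation, the identification of $\gamma_1$) are sound.
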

\begin{proof}
Let $P_i= (x_i : y_i : z_i) \in C_i$ for $i=1,2,3$, with
$\mu(P_1,P_2,P_3) = 0_E$. Let $Q_i$ be the image of $P_i$ 
under the covering map $C_i \to E$. By the formulae for
the covering map coming from classical invariant theory
(see for example \cite[Proposition 4.2]{Cr}),
the $x$-coordinate of $Q_i$ is
\begin{equation}
\label{eqn:cov}
 \xi_i = \frac{3 h_i(x_i,z_i)}{4 g_i(x_i,z_i)}. 
\end{equation}
We recall from Section~\ref{sec:stat} that 
\begin{equation}
\label{eqn:reln}
 z(g_i)  \frac{ 4 \varphi g_i + h_i}{3} = H_i^2.
\end{equation}
By~\eqref{eqn:cov}, \eqref{eqn:reln} and the equation 
$y_i^2 = g_i(x_i,z_i)$ for $C_i$ we have 
\[ \xi_i + 3 \varphi = \frac{ 9 H_i^2 }{4 z(g_i) y_i^2} \]
and hence
\begin{equation}
\label{eq1}
 \prod_{i=1}^3 (\xi_i + 3 \varphi) = 
\left( \frac{27 H_1 H_2 H_3}{8 m y_1 y_2 y_3} \right)^2.
\end{equation}
Since $Q_1 + Q_2 + Q_3 = 0_E$ these points lie on a line,
say $y = \lambda x + \nu$ for some $\lambda,\nu \in K$. Then 
as a polynomial in $x$ we have
\[  x^3 - 27 I x - 27 J - ( \lambda x + \nu)^2 = 
(x-\xi_1)(x-\xi_2)(x-\xi_3). \]
Putting $x = -3 \varphi$ gives
\begin{equation}
\label{eq2}
 \prod_{i=1}^3 (\xi_i + 3 \varphi) = (\nu - 3 \lambda \varphi)^2.
\end{equation}

We first suppose $E(K)[2] =0$. In this case $L$ is a field, so
comparing~\eqref{eq1} and~\eqref{eq2} we have
\[\frac{27 H_1 H_2 H_3}{8 m y_1 y_2 y_3}  = \pm (\nu - 3 \lambda \varphi),\]
in $L(S)$. Taking the coefficient of $\varphi^2$ 
we see that $F_2$ vanishes on $S$. In general there are $\#E(K)[2]$
choices for the square root, up to sign, and these correspond to 
the $\#E(K)[2]$ choices in Remark~\ref{rem:many}.
 
It remains to check that $F_2$ is not identically zero. 
For this we may work over an algebraically closed field.
Then by a change of coordinates we may suppose that $g_i$ and $h_i$ 
are linear combinations of $x_i^4 + z_i^4$ and $x_i^2 z_i^2$.
The singular quartics in this pencil are
$(x_i^2 - z_i^2)^2$, $(x_i^2 + z_i^2)^2$ and $(x_i z_i)^2$.
Since $L \isom K \times K \times K$ we may identify $H_i$ as a triple 
of binary quadratic forms. These are 
non-zero multiples of $x_i^2 - z_i^2$, $x_i^2 + z_i^2$ and $x_i z_i$,
in this order if we made a suitable change of coordinates.
(This last claim may be checked without any calculation if we use
stereographic projection to identify the roots of the binary quadratic
forms with the vertices of an octahedron, and then rotate the octahedron.)
Therefore the space of $(2,2,2)$-forms spanned by $F_0,F_1,F_2$
contains the forms 
\[ (x_1^2 - z_1^2)(x_2^2 - z_2^2)(x_3^2 - z_3^2), \quad
(x_1^2 + z_1^2)(x_2^2 + z_2^2)(x_3^2 + z_3^2), \quad
x_1 z_1 x_2 z_2 x_3 z_3.  \]
Since these are linearly independent, it follows that $F_2$ is non-zero.
\end{proof}

\begin{ProofOf}{Theorem~\ref{thm:main}}
Let $F = F_2$ be the equation for $S$ in Theorem~\ref{thm:m}.
We specialise the last two sets of variables in~\eqref{get222} 
to $(1,0)$. Then comparing with~\eqref{getgamma} we have
$F(x,z;1,0;1,0) = \gamma_1(x,z)$. By Corollary~\ref{cor}
we have \[ \Psi_{C_1}([C_2]) = (K(\sqrt{a})/K,\gamma_1(x,z)/z^2), \]
where $a = g_3(1,0)$. Then by \eqref{def:ctp} we have
\[ \langle [C_1],[C_2] \rangle_{\CT} = \sum_{v \in M_K} 
\inv_v (K_v(\sqrt{a})/K_v,\gamma_1(x_v,z_v)/z_v^2). \]
Subject to identifying $\mu_2 = \frac{1}{2}\Z/\Z$,
the Hilbert norm residue symbol is given by
   \[ (a,b)_v = \inv_v (K_v(\sqrt{a})/K_v, b). \]
This gives the formula in Theorem~\ref{thm:main}, except that 
we have $g_3(1,0)$ in place of $g_2(1,0)$. As noted in 
Remark~\ref{rems}(v), this change does not matter.
\end{ProofOf}

\begin{Remark}
\label{rem:gamma}
To show that $\gamma_1(x,z)$ is not identically zero we show
more generally that $F$ cannot be made to vanish by specialising
two of the sets of variables. Indeed, by considering $F$ as
given in Remark~\ref{rem:W}, it suffices to show that the polynomials
$W_0,W_1,W_2$ never simultaneously vanish. 
This may be checked by setting $x_1 = x_2 = x$ and computing that
the resultant of $W_1$ and $W_2$ is $2^8(4 a^3 + 27 b^2)^2$. This last
expression is non-zero, by definition of an elliptic curve.
\end{Remark}


\begin{thebibliography}{MM}
\frenchspacing
\renewcommand{\baselinestretch}{1}

\bibitem{BH}
M. Bhargava and W. Ho,
Coregular spaces and genus one curves,
{\em Camb. J. Math.} {\bf{4}} (2016), no. 1, 1--119.

\bibitem{BSDI}
B.J. Birch and H.P.F. Swinnerton-Dyer, 
Notes on elliptic curves, I.
{\em J. reine angew. Math.} {\bf{212}} (1963) 7--25.

\bibitem{magma}
W. Bosma, J. Cannon, and C. Playoust,
The Magma algebra system, I. The user language,
{\em J. Symbolic Comput.}, {\bf{24}} (1997), 235--265. 

\bibitem{CaIV}
J.W.S. Cassels, 
Arithmetic on curves of genus 1, 
IV. Proof of the Hauptvermutung,
{\em J. reine angew. Math.} {\bf{211}} (1962) 95--112.

\bibitem{CaL}
J.W.S. Cassels, 
{\em Lectures on elliptic curves},
LMS Student Texts, {\bf{24}}, Cambridge University Press, Cambridge, 1991. 

\bibitem{Ca98}
J.W.S. Cassels,
Second descents for elliptic curves,
{\em J. reine angew. Math.} {\bf{494}} (1998), 101--127.

\bibitem{CrTables}
J.E. Cremona, 
{\em Algorithms for modular elliptic curves},
Second edition, Cambridge University Press, Cambridge, 1997. 

\bibitem{Cr}
J.E. Cremona, 
Classical invariants and 2-descent on elliptic curves,
{\em J. Symbolic Comput.} {\bf{31}} (2001), no. 1-2, 71--87.

\bibitem{CF}
J.E. Cremona and T.A. Fisher, 
On the equivalence of binary quartics,
{\em J. Symbolic Comput.} {\bf{44}} (2009), no. 6, 673--682.

\bibitem{D}
S. Donnelly, {\em Algorithms for the Cassels-Tate pairing}, preprint, 2015.

\bibitem{F}
T.A. Fisher, 
The Cassels-Tate pairing and the Platonic solids,
{\em J. Number Theory} {\bf{98}} (2003), no. 1, 105--155.

\bibitem{FSS}
T.A. Fisher, E.F. Schaefer and M. Stoll,
The yoga of the Cassels-Tate pairing,
{\em LMS J. Comput. Math.} {\bf{13}} (2010), 451--460.

\bibitem{GH}
P. Griffiths and J. Harris, 
On Cayley's explicit solution to Poncelet's porism,
{\em  Enseign. Math.} (2) {\bf{24}} (1978), no. 1-2, 31--40. 

\bibitem{Lichtenbaum}
S. Lichtenbaum, 
Duality theorems for curves over $p$-adic fields.
{\em Invent. Math.} {\bf{7}} (1969) 120--136.

\bibitem{pari}
The PARI~Group, PARI/GP version \texttt{2.13}, Univ. Bordeaux, 2022,
\url{http://pari.math.u-bordeaux.fr/}

\bibitem{PS}
B. Poonen and M. Stoll, 
The Cassels-Tate pairing on polarized abelian varieties,
{\em Ann. of Math.} (2) {\bf{150}} (1999), no. 3, 1109--1149.

\bibitem{Weil54}
A. Weil, 
Remarques sur un m\'emoire d'Hermite,
{\em Arch. Math. (Basel)} {\bf{5}}, (1954) 197--202.

\bibitem{Yan}
J. Yan, {\em Computing the Cassels-Tate pairing for Jacobian varieties of genus two curves}, PhD thesis, University of Cambridge, 2021,
\url{https://doi.org/10.17863/CAM.72729}

\end{thebibliography}
\end{document}